\newcommand{\per}{{\rm{per}}}
\newcommand{\perrank}{{\rm{perrank}}}
\newcommand{\ppr}{{\rm{ppr}}}
\newcommand{\hs}{\hspace{1mm}}
\newcommand{\cyc}{{\rm{cyc}}}
\newtheorem{prop}{Proposition}
\newtheorem{defi}{Definition}
\newtheorem{thm}{Theorem}
\newtheorem{lem}{Lemma}
\newtheorem{fact}{Fact}
\title{On the Principal Permanent Rank Characteristic Sequences of Graphs and Digraphs\thanks{Research supported in part by NSF grant DMS-1427526, ``The Rocky Mountain - Great Plains Graduate Research Workshop in Combinatorics''.}
}
\author{Keivan Hassani Monfared\thanks{Department of Mathematics and Statistics, University of Calgary, k1monfared@gmail.com} 
\and Paul Horn\thanks{Department of Mathematics, University of Denver, paul.horn@du.edu}
\and Franklin H. J. Kenter\thanks{Department of Computational and Applied Mathematics, Rice University, franklin.h.kenter@rice.edu}
\and Kathleen Nowak\thanks{Department of Mathematics, Iowa State University, knowak@iastate.edu}
\and John Sinkovic\thanks{Department of Combinatorics and Optimization, University of Waterloo, johnsinkovic@gmail.com}
\and Josh Tobin\thanks{Department of Mathematics, University of California - San Diego, rjtobin@ucsd.edu}}
\begin{document}
\maketitle 

\begin{abstract}
%The permanent rank of a matrix is the size of the largest square submatrix with nonzero permanent. 
%Similarly, 
The principal permanent rank characteristic sequence
%is the largest principal square submatrix with nonzero permanent. 
is a binary  sequence $r_0 r_1 \ldots r_n$ where $r_k = 1$ if there exists a principal square submatrix of size $k$ with nonzero permanent and $r_k = 0$ otherwise, and $r_0 = 1$ if there is a zero diagonal entry. 

A characterization is provided for all principal permanent rank sequences obtainable by the family of nonnegative matrices as well as the family of nonnegative symmetric matrices. Constructions for all realizable sequences are provided. 
%The main approach is to relate the principal permanent rank of matrices to properties of graphs. In particular, we treat each matrix as a weighted graph and investigate  the relation of generalized cycles as subgraphs to the principal permanent rank sequence.
Results 
%for tensor products and 
for skew-symmetric matrices are also included.

\end{abstract}

\noindent MSC: 15A15; 15A03; 15B57; 05C50\\
Keywords: Symmetric matrix, Skew-symmetric matrix, Permanent rank, Principal permanent rank characteristic sequenc, generalized cycle, matching, minor

\section{Introduction}

The \emph{principal rank characteristic sequence problem} introduced by Brualdi, Deaett, Olesky and van den Driessche asks \cite{brualdi12}: 
\begin{quotation}
\noindent Given a binary sequence $r_0 r_1 \ldots r_n$ of length $n+1$, is there an $n \times n$ matrix $A$ such that  $r_k = 1$ if and only if there is a principal submatrix of rank $k$?
\end{quotation}
This problem is a simplified form of the more general \emph{principal assignment problem} (see for example \cite{holtz02}).

Recently, several groups have studied the principal rank characteristic sequence problem with different variations. For real matrices, Brualdi, et. al, characterize all realizable sequences with $n \le 6$ and all realizable sequences beginning $010\ldots$ for $7 \le n \le 10$. They also provide several forbidden subsequences \cite{brualdi12}. Barrett, et. al. characterize all allowable sequences over fields with characteristic 2 and also provide additional results for other fields \cite{barrett14}. Additionally, in \cite{butler}, the authors study a variation, the \emph{enhanced principal rank sequence}, which differentiates whether ``some'' or ``all'' of the principal minors of order $k$ have rank $k$ where they characterize all such realizable sequences for real matrices of order $n\le 5$.

Our focus will be the permanent, $\per(A)$, instead of the rank or determinant. Recall the definition of the permanent:
\begin{defi}[From \cite{minc78}]
	The \emph{\textbf{permanent}} of an $n\times n$ matrix $A$ is defined to be the sum of all diagonal products of $A$. That is,
	\[\displaystyle \per(A)=\sum_{\sigma\in S_n} \left( \prod_{i=1}^{n} a_{i\,\sigma(i)}\right)\]
\end{defi}
while
\[\displaystyle \det(A)=\sum_{\sigma\in S_n} \left({\rm{sgn}}(\sigma) \prod_{i=1}^{n} a_{i\,\sigma(i)}\right)\]
where $\rm{sgn}(\cdot)$ is the sign of the permutation.
Hence, in some sense, the permanent can be viewed as a variation of the determinant.  
%where $S_n$ is the set al all permutations on $1, \ldots  n$. Hence, the permanent can be viewed as variation of the determinant. 

Note that determining whether there is a principal submatrix of rank $k$ is equivalent to seeing if there is a principal submatrix of size $k$ with nonzero determinant (see \cite{brualdi12}).
Therefore, in a similar fashion, one can define the \emph{permanent rank}:
\begin{defi}[From \cite{yu99}]
	The \emph{\textbf{principal permanent rank}}  of a matrix $A$, denoted $\perrank (A)$ is defined to be the size of the largest square submatrix with nonzero permanent. 
\end{defi}
%In fact, Yu first defined and studied the permanent rank in relation to the Alon-Jaeger-Tarsi conjecture \cite{yu99}.

We study the principal permanent rank characteristic sequence defined as follows.

\begin{defi}
Given an $n\times n$  matrix $A$, the \emph{\textbf{principal characteristic permanent rank sequence}} of $A$ (abbreviated ppr-sequence of $A$ or $\ppr(A)$) is defined as $r_0 r_1 r_2 \ldots r_n$ where for $1 \leq k \leq n$
\[ r_k= \begin{cases}
1 & \text{if $A$ has a principal submatrix of size $k$ with nonzero permanent, and }\\
0 & \text{otherwise,}
\end{cases}\]
while $r_0 = 1$ if and only if $A$ has a zero on its main diagonal.
\end{defi}

%{\color{blue} Maybe boldface the terms being introduced in the definitions? Nonitalic in italic looks weird to me but it's not a sticking point. }

%studied the properties of the \emph{permanent rank} of a matrix which is the largest $k$ for which there is  a square $k \times k$ submatrix with nonzero permanent \cite{yu99}.

Naturally, in this paper, we introduce the \emph{principal permanent rank sequence problem}:

\begin{quotation}
	\noindent Given a binary sequence $r_0 r_1 \ldots r_n$, when  is there an $n \times n$ matrix $A$ such that  $ppr(A) = r_0 r_1 \ldots r_n$? 
\end{quotation}

Our contribution is to answer this question and to fully characterize which sequences can be realized for various families of real matrices including
\begin{itemize}
\item nonnegative matrices (Section \ref{sectiondirected}, Theorem \ref{onenonsymmetric}),
\item symmetric nonnegative matrices (Section \ref{sectionundericted}, Theorem \ref{fuzzyegg}), and
\item skew-symmetric matrices whose underlying graph is a tree (Section \ref{skew}, Theorem \ref{thm.skewtree}).
\end{itemize}
Additionally, for each characterization, we provide a construction that produces a realization for any realizable sequence.

\section{Preliminaries}

Our main approach is to exploit the duality between matrices and graphs. Throughout, we will consider graphs, both directed and undirected and with or without loops. However, we will not consider graphs with multiple edges (see Proposition \ref{zerononzero}).

Let $[n] = \{1, \ldots, n\}$. For a (directed) graph $G$ on $n$ vertices, $V(G) = [n]$, and $\alpha \subseteq [n]$, the graph $G[\alpha]$ is the induced subgraph of $G$ on vertices in $\alpha$. For an $n\times n$ matrix $A$ and $\alpha \subseteq [n]$, $A[\alpha]$ denotes the principal submatrix of $A$ from rows and column indexed by $\alpha$. The zero--nonzero pattern of $A$ is a $(0,1)$-matrix $B$ of the same order where $B_{ij}=1$ if and only if $A_{ij}\neq 0$. Also,  the \emph{underlying graph} of a matrix $A$ is the graph $G$ whose adjacency matrix is the zero--nonzero pattern of $A$. Note that $G$ is undirected if and only if the zero--nonzero pattern of $A$ is symmetric.

%If $A$ is symmetric, we take $G$ to be undirected otherwise $G$ is directed.
%The following proposition shows that the ppr-sequence of a nonnegative matrix and its zero--nonzero pattern are equal. 
The following proposition shows that the ppr-sequence of a nonnegative matrix and its zero--nonzero pattern are one and the same. Thus, for a nonnegative matrix, we will focus our attention on its underlying graph. 
%Therefore, for our study, it is not necessary to consider multiple edges.

%{\color{blue} My reasoning for the change above: Nonnegative matrices are not necessarily integral. The matrix more naturally corresponds to a weighted graph than one with multiple edges in my opinion. }

\begin{prop} \label{zerononzero}
	Let $B$ be the zero--nonzero pattern of an $n\times n$ nonnegative matrix $A$. Then $\ppr(A) = \ppr(B)$.
\end{prop}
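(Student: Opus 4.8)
The plan is to reduce everything to a single observation: because $A$ is nonnegative, the permanent expansion contains no cancellation, so a permanent is nonzero precisely when one of its diagonal products is nonzero. First I would fix an arbitrary index set $\alpha \subseteq [n]$ with $|\alpha| = k$ and note that $B[\alpha]$ is exactly the zero--nonzero pattern of $A[\alpha]$, so it suffices to compare $\per(A[\alpha])$ with $\per(B[\alpha])$ for each such $\alpha$.

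Next I would write out
\[
\per(A[\alpha]) = \sum_{\sigma} \prod_{i \in \alpha} a_{i\,\sigma(i)},
\]
where $\sigma$ ranges over the bijections of $\alpha$. Since every entry $a_{ij} \geq 0$, each summand $\prod_{i \in \alpha} a_{i\,\sigma(i)}$ is nonnegative, and hence $\per(A[\alpha]) \neq 0$ if and only if at least one summand is strictly positive, i.e.\ if and only if there is a $\sigma$ with $a_{i\,\sigma(i)} \neq 0$ for all $i \in \alpha$. The defining property of the pattern, $b_{ij} = 1 \iff a_{ij} \neq 0$, then translates this last condition into the statement that $\prod_{i \in \alpha} b_{i\,\sigma(i)} = 1$ for that same $\sigma$; applying the identical nonnegativity argument to $B$ (whose entries are $0$ or $1$) shows this holds if and only if $\per(B[\alpha]) \neq 0$. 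Thus $A[\alpha]$ has nonzero permanent exactly when $B[\alpha]$ does, which gives $r_k(A) = r_k(B)$ for every $1 \le k \le n$.

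Finally I would dispatch the $r_0$ entry separately, since it is defined by the diagonal rather than by a permanent: $A$ has a zero diagonal entry if and only if some $a_{ii} = 0$, equivalently some $b_{ii} = 0$, so $r_0(A) = r_0(B)$ as well. Combining the two cases yields $\ppr(A) = \ppr(B)$. I do not expect any genuine obstacle here; the only point requiring care is making explicit that nonnegativity is precisely what rules out the sign cancellations that occur for the determinant, so that "nonzero permanent'' reduces cleanly to "some diagonal product is nonzero.''
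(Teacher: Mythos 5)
Your proposal is correct and follows essentially the same route as the paper: fix $\alpha$, observe that nonnegativity forces every diagonal product to be nonnegative so the permanent is nonzero exactly when some diagonal product is positive, translate that condition through the zero--nonzero pattern, and handle $r_0$ separately via the diagonal entries. No gaps.
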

\begin{proof}
	Let $\ppr(B) = q_0 q_1 \ldots q_{n}$ and $\ppr(A) = r_0 r_1 \dots r_{n}$. First, by definition, $a_{ii} = 0$ if and only if $b_{ii} = 0$ for $i \in [n]$. Thus, $r_0 = q_0$. 

	Now fix $k \in [n]$ and let $\alpha = \{i_1,i_2, ..., i_k\} \subseteq [n]$. Note that every term in both $\per(A[\alpha])$ and $\per(B[\alpha])$ is nonnegative. Next let $S_\alpha$ be the set of all permutations on $\alpha$. Then for $\sigma \in S_\alpha$, 
	\[
	\prod_{j = 1}^k a_{i_j, \sigma(i_j)} > 0 \hs \text{ if and only if } \hs \prod_{j = 1}^k b_{i_j, \sigma(i_j)} > 0
	\]
	and vice versa. Thus,
	\[
	\per(A[\alpha]) = \sum_{\pi \in S_\alpha}\prod_{j = 1}^k a_{i_j,\pi(i_j)} > 0\]
	if and only if \[ \per(B[\alpha]) = \sum_{\pi \in S_\alpha}\prod_{j = 1}^k b_{i_j,\pi(i_j)} > 0.
	\]
	Hence, $q_k = r_k$, for $k=1,2,\ldots, n$.
\end{proof}

 It is well known that various graph properties are captured by the permanent rank of matrices describing the graph. Such properties include the size of a largest \textit{generalized cycle} and the size of the largest perfect matching in the graph (see \cite{monfared12} and the references therein). Let us formally define a generalized cycle.

\begin{defi}
 A \emph{\textbf{generalized cycle}} of size $k$ is a permutation, $\pi_C$, on a subset of $k$ vertices, $C$, such that $i \pi_C(i)$ is a directed edge (or a loop if $i = \pi_C(i)$) for all $i \in C$.
\end{defi}

%{\color{blue} We need a '.' in the definition above. Also are these edges directed or not? You can define them as directed but then you need to be more careful in your permanent calculation I think.}

Observe that for a (directed) graph $G$, $C \subset V(G)$ supports a generalized cycle if there is a collection of edges within $G[C]$, such that every component of the subgraph induced on those edges has a Hamiltonian cycle. A generalized cycle can be viewed as both a permutation or a subset of edges. Here, a bi-directed edge (or undirected edge) can be seen as a 2-cycle.  
%As an abuse of notation, 
With this clear bijection, we will refer to such a collection of cycles also as a ``generalized cycle.''

%\begin{defi}
%Given an $n\times n$ real matrix $A$, the \emph{principal characteristic permanent rank sequence} of $A$ (abbreviated ppr-sequence of $A$ or $\ppr(A)$) is defined as $r_0 r_1 r_2 \ldots r_n$ where for $1 \leq k \leq n$
%\[ r_k= \begin{cases}
%1 & \text{if $A$ has a principal submatrix of size $k$ with nonzero permanent, and }\\
%0 & \text{otherwise,}
%\end{cases}\]
%while $r_0 = 1$ if and only if $A$ has a zero on its main diagonal.
%\end{defi}

%For a (directed) graph $G$ on $n$ vertices $1$, $2$, $\ldots$, $n$, and $\alpha \subseteq [n]$, the graph $G[\alpha]$ is the induced subgraph of $G$ on vertices in $\alpha$. For an $n\times n$ matrix $A$ and $\alpha \subseteq [n]$, $A[\alpha]$ denotes the principal submatrix of $A$ from rows and column indexed by $\alpha$. The zero-nonzero pattern of $A$ is a $(0,1)$-matrix $B$ of the same order where $B_{ij}=1$ if and only if $A_{ij}\neq 0$. The following proposition shows that the ppr-sequences of a nonnegative matrix and its zero-nonzero pattern are equal.

%%FK NOTE: We decided to define a generalized cycle with permutations. The above definition is much more succinct.

Further, a generalized cycle of order $|G|$ is said to be \emph{spanning}. Next, recall that a \emph{matching} is a collection of disjoint edges. Since a matching in an undirected graph can be viewed as a disjoint collection of directed 2-cycles, every matching forms a generalized cycle.  %{\color{blue} Why must the graph be undirected?} 
%Answer: Because a matching on a directed graph does not necessarily make sense.
The set of all generalized cycles of order $k$ of a (directed) graph $G$ is denoted by $\cyc_k(G)$.

%With a generalized cycle $C$ of a (directed) graph $G$, we associate a permutation of the vertices of $C$ as follows. Each (directed) cycle $(v_1, v_2, ..., v_k)$ is associated with the cyclic permutation $(v_1 v_2 \ldots v_k)$ and each loop $v_1 v_1$ is associated with the permutation $(v_1)$ which fixes $v_1$. The permutation $\pi_C$ is defined to be the product of these associated permutation cycles.
%\end{defi}

The connection between generalized cycles and permanent ranks is made formal by the following proposition.

\begin{prop}\label{gencycle}
	Let $G$ be the underlying (directed) graph of the nonnegative matrix $A$ and let $\ppr(A) = r_0 r_1  \ldots r_{n}$. For $k \ge 1$, $r_k = 1$ if and only if $G$ has a (directed) generalized cycle of order $k$. 
\end{prop}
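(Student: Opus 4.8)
The plan is to unravel the definition of the permanent term by term and then exploit nonnegativity to rule out cancellation. The starting observation is that for any $\alpha \subseteq [n]$ with $|\alpha| = k$, the permanent expands as a sum over permutations $\sigma$ of $\alpha$ of the diagonal products $\prod_{i \in \alpha} a_{i,\sigma(i)}$, and each such product is nonnegative because $A$ is nonnegative. Consequently there is no cancellation: $\per(A[\alpha]) > 0$ if and only if at least one diagonal product is strictly positive, that is, if and only if there is a permutation $\sigma$ of $\alpha$ with $a_{i,\sigma(i)} > 0$ for every $i \in \alpha$. Alternatively, one may first invoke Proposition \ref{zerononzero} to replace $A$ by its $(0,1)$-pattern $B$, the adjacency matrix of $G$, and run the same argument with $B$.

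Next I would translate this positivity condition into the language of $G$. By the definition of the underlying graph, $a_{i,\sigma(i)} > 0$ holds exactly when $i\,\sigma(i)$ is a (directed) edge of $G$, with the convention that $i = \sigma(i)$ corresponds to a loop. A permutation $\sigma$ of $\alpha$ for which every $i\,\sigma(i)$ is an edge is precisely a generalized cycle of size $k$ supported on $\alpha$. Thus the existence of a positive diagonal product in $\per(A[\alpha])$ is literally the same as the existence of a generalized cycle of order $k$ on the vertex set $\alpha$.

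Combining the two observations yields both implications. For the forward direction, $r_k = 1$ means some size-$k$ principal submatrix $A[\alpha]$ has positive permanent; nonnegativity then produces a positive diagonal product, hence a generalized cycle of order $k$. For the reverse direction, a generalized cycle of order $k$ on a set $C$ furnishes a permutation $\pi_C$ whose corresponding diagonal product in $\per(A[C])$ is strictly positive; since every other term is nonnegative, $\per(A[C]) > 0$, and so $r_k = 1$.

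I do not expect a serious obstacle: once nonnegativity is available the proposition is essentially a restatement of the definition of the permanent. The only point demanding care is the nonnegativity hypothesis itself, which is exactly what guarantees that a single positive diagonal product cannot be cancelled by the others, making positivity of the permanent equivalent to the combinatorial existence of a generalized cycle. For general signed matrices this equivalence can fail, which is why the hypothesis that $A$ is nonnegative is indispensable.
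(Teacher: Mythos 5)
Your proof is correct and follows the same route as the paper: expand $\per(A[\alpha])$ over permutations of $\alpha$, note that nonnegativity precludes cancellation, and identify the strictly positive diagonal products with generalized cycles of order $k$. The paper's own proof is just a terser version of this argument.
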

\begin{proof}
	Let $\alpha  \subseteq [n]$ with $|\alpha| = k$. %First, note that $\per(A[\alpha]) \geq 0$. 
	\begin{align*}
		\per(A[\alpha]) &= \sum_{\pi \in S_\alpha}\prod_{j = 1}^k a_{i_j,\pi(i_j)}
	\end{align*}
	where $\alpha = \{i_1, ..., i_k\}$. A term of the sum above is nonzero (and positive) if and only if $\pi \in \cyc_k(G)$

%Thus, $\per(A[\alpha]) > 0$ if and only if $G[\alpha]$ has a spanning generalized cycle. That is $r_k = 1$ if and only if $G$ has a generalized cycle of order $k$, for $k \geq 1$.
\end{proof}

%{\color{blue} I think we're being a little ambiguous about whether the graph/generalized cycle is directed or not. I suggest defining the generalized cycle with directed edges and then viewing all graphs as directed.}
% FK answer: We define a generalized cycle as a permutation which implicitly are directed edges.

We say a binary sequence $r_0 r_1 \ldots r_n$ is \emph{realizable}, if there is a matrix whose ppr-sequence is $r_0 r_1 \ldots r_n$.
%
%In Section \ref{sectiondirected} we characterize all principal permanent rank characteristic sequences of (entry-wise) nonnegative matrices. In Section \ref{sectionundericted} we  characterize all principal permanent rank characteristic sequences of symmetric (entry-wise) nonnegative matrices. \comment{In Section \ref{sectionotherresults} we  provide some other results (shall we omit this? and this) In section \ref{sectionfuture} some questions regarding the principal permanent rank characteristic sequences of matrices are asked.}

\section{General Nonnegative Matrices}\label{sectiondirected}
In this section, we characterize the principal permanent rank sequences of nonnegative matrices. We prove:

\begin{thm}\label{zerononsymmetric}\label{onenonsymmetric}
The binary sequence $r_0 r_1 \ldots r_n$ is realizable as a ppr-sequence of a nonnegative matrix if and only if
\begin{itemize}
\item $r_0 = 0$ and $r_i = 1$ for $i=1,2,\ldots, n$ or
\item $r_0 = 1$.
\end{itemize}
%then there is a nonnegative $n \times n$ matrix $A$ such that $\ppr(A) = r_0 r_1 \ldots r_n$. 
\end{thm}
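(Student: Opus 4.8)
The plan is to prove both directions using Proposition \ref{gencycle}, which lets me replace all reasoning about permanents by reasoning about generalized cycles in the underlying digraph $G$ of the nonnegative matrix $A$. For necessity, suppose the sequence is realized by such an $A$. If $r_0 = 1$ we are already in the second case and there is nothing to check, so assume $r_0 = 0$. Then $A$ has no zero diagonal entry, i.e. every vertex of $G$ carries a loop. Any set of $k$ vertices then supports the generalized cycle consisting of the $k$ loops on those vertices, so $\cyc_k(G) \neq \emptyset$ and hence $r_k = 1$ for every $k \geq 1$; equivalently, $\per(A[\alpha]) \geq \prod_{i \in \alpha} a_{ii} > 0$ since every term of the permanent is nonnegative and the identity term is positive. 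Thus a realizable sequence with $r_0 = 0$ must have all later entries equal to $1$, which is exactly the first case.

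For sufficiency, the case $r_0 = 0$ with $r_i = 1$ for all $i$ is handled by the identity matrix $I_n$: it has nonzero diagonal (so $r_0 = 0$) and every $k \times k$ principal submatrix has permanent $1$ (so $r_k = 1$).

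The substantive case is $r_0 = 1$, where I must realize an \emph{arbitrary} pattern $r_1 \ldots r_n$. Write $S = \{k \geq 1 : r_k = 1\}$, an arbitrary subset of $[n]$. I would build a digraph $G$ on vertices $v_1, \ldots, v_n$ consisting of the directed path $v_1 \to v_2 \to \cdots \to v_n$, together with a loop at $v_1$ if $1 \in S$ and a back edge $v_j \to v_1$ for each $j \in S$ with $j \geq 2$, and let $A$ be its $(0,1)$ adjacency matrix (a simple digraph, so no multiple edges). The key structural point is that every directed cycle must pass through $v_1$: the path edges strictly increase the vertex index, and the only edges that decrease it are the back edges into $v_1$, so a directed cycle must enter $v_1$ via some $v_j \to v_1$, leave via $v_1 \to v_2$, and then climb the path, i.e. it is exactly $v_1 \to v_2 \to \cdots \to v_j \to v_1$ of length $j$ (or the loop, of length $1$). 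Since all of these cycles share $v_1$, no two are vertex-disjoint, so every generalized cycle is a single such cycle; hence $\cyc_k(G) \neq \emptyset$ precisely when $k \in S$, and by Proposition \ref{gencycle} we get $r_k = 1 \iff k \in S$. Finally $v_2, \ldots, v_n$ carry no loops, so $A$ has a zero diagonal entry and $r_0 = 1$.

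The one point needing genuine care, and the main obstacle, is the verification that no \emph{extra} generalized cycles appear: the index-monotonicity argument is exactly what pins the order set down to $S$ rather than to the larger ``subset-sum'' closure one would get from vertex-disjoint cycles, and it is precisely why all back edges are routed to the common vertex $v_1$. I would also flag the boundary case $n = 1$, where $r_0 = 1$ forces the single vertex to be loopless and hence $r_1 = 0$; the statement is understood for $n \geq 2$, in which range the construction always leaves $v_2$ loopless.
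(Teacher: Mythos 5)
Your proof is correct and uses essentially the same construction as the paper: a directed path $v_1 \rightarrow v_2 \rightarrow \cdots \rightarrow v_n$ with a back edge $v_k \rightarrow v_1$ (a loop when $k=1$) exactly when $r_k = 1$, with your verification via the observation that every cycle passes through $v_1$ being a slightly cleaner packaging of the paper's case analysis over vertex subsets $S$. Your boundary remark about $n=1$ (where $r_0 = r_1 = 1$ is in fact not realizable, since a $1\times 1$ nonnegative matrix with zero diagonal is the zero matrix) is a legitimate caveat that the paper's statement overlooks.
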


First, let us prove the following lemma.

\begin{lem}\label{lem:zeroout}
Let $A$ be a nonnegative $n \times n$ matrix with $\ppr$-sequence $r_0 r_1 \ldots r_n$. If $r_0 = 0$, then $r_i = 1$ for all $i=1, 2, \ldots, n$.
\end{lem}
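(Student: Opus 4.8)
The plan is to translate the hypothesis $r_0 = 0$ into a statement about the diagonal of $A$ and then exhibit, for every size $k$, a principal submatrix whose permanent is forced to be positive. By definition, $r_0 = 0$ means that $A$ has no zero entry on its main diagonal, i.e.\ $a_{ii} \neq 0$ for all $i \in [n]$. Since $A$ is nonnegative, this upgrades to $a_{ii} > 0$ for every $i$.

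Next I would fix $k \in [n]$ and take \emph{any} $\alpha \subseteq [n]$ with $|\alpha| = k$, say $\alpha = \{i_1, \ldots, i_k\}$. In the expansion
\[
\per(A[\alpha]) = \sum_{\pi \in S_\alpha} \prod_{j=1}^{k} a_{i_j, \pi(i_j)},
\]
the term corresponding to the identity permutation is $\prod_{j=1}^{k} a_{i_j i_j} > 0$, since each diagonal factor is strictly positive. Because $A$ is nonnegative, every remaining term in the sum is nonnegative, so no cancellation can occur and $\per(A[\alpha]) > 0$. Thus $A$ has a principal submatrix of size $k$ with nonzero permanent, giving $r_k = 1$; as $k$ was arbitrary, this holds for all $k = 1, 2, \ldots, n$.

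Equivalently, one may phrase the same argument through Proposition \ref{gencycle}: the condition $a_{ii} \neq 0$ says that every vertex of the underlying graph $G$ carries a loop, so for any $k$ vertices the collection of their $k$ loops is a generalized cycle of order $k$, whence $r_k = 1$. I expect no serious obstacle here; the only point that must be handled with care is the role of nonnegativity. It is precisely nonnegativity that guarantees the positive diagonal contribution cannot be cancelled by other terms, and this is what makes the diagonal product a valid certificate of a nonzero permanent. For a general (signed) matrix the analogous claim can fail, so the argument should emphasize that the hypothesis $A \geq 0$ is exactly what is being used.
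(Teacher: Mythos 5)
Your proof is correct and matches the paper's argument in substance: the paper invokes Proposition \ref{gencycle} with a generalized cycle of $k$ loops, which is exactly your identity-permutation term in the permanent expansion, and you even note this equivalence explicitly. The only difference is that the paper's proof additionally observes that $I_n$ realizes the sequence $011\cdots1$ (a converse used later in Theorem \ref{onenonsymmetric}), which is not part of the lemma's statement and so is not needed here.
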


\begin{proof}
Recall $r_0 = 0$ if and only if there is a loop on every vertex in  {the underlying} graph $G$. Thus, for all $k \in [n]$, $G$ has a generalized cycle of order $k$ consisting of $k$ loops. Therefore, by Proposition \ref{gencycle}, $r_k = 1$ for all $k \in [n]$. Lastly, any sequence of the form $r_0 r_1 \ldots r_n = 0 1 1 \ldots 1$ is realized by $I_n$, the identity matrix of order $n$.
\end{proof}

\begin{proof}[Proof of Theorem \ref{zerononsymmetric}]
	The case when $r_0 = 0$ is covered by Lemma \ref{lem:zeroout}. Hence, we can assume $r_0=1$.
%\end{proof}
%\begin{thm}\label{onenonsymmetric}
%	Any sequence of the form $1 r_1 r_2 \ldots r_n$ is realizable by the adjacency matrix of a directed graph, where $n \geq 2$.
%\end{thm}
%\begin{proof}
We will construct a directed graph, $G$, as follows. Start with the directed path $v_1 \rightarrow v_2 \rightarrow \cdots \rightarrow v_n$. Next, for each $k \in [n]$, add a directed edge from $v_k$ to $v_1$ if and only if $r_k = 1$ (see Figure \ref{fig.backtrack}).

	Let $A$ be the adjacency matrix of $G$ and $\ppr(A) = q_0 q_1 \ldots q_n$. We claim that $q_i = r_i$ for each $i \in [n]$.  First note that $r_0 = 1$, because $a_{22} = 0$.  Now consider $r_k$, for $k \geq 1$.  If $r_k = 1$, then there is an edge from $v_k$ to $v_1$. Hence $C = (v_1, ..., v_k)$ is a directed generalized cycle of order $k$ in $G$. Thus, by Proposition \ref{gencycle}, $q_k = 1$. 
	
	Now suppose that $r_k = 0$ and consider a subset $S$ of $k$ vertices. If $v_1 \notin S$, then $G[S]$ is a disjoint union of directed paths and thus has no spanning generalized cycle. Now assume that $v_1 \in S$. If $v_j \in S$ for some $j >k$, then $v_i \notin S$ for some $1 < i \leq k$. Thus, $G[S]$ has no generalized cycle containing $v_j$. Finally, if $S = \{v_1, v_2, ..., v_k\}$, $G[S]$ is a graph on $k$ vertices with a pendent vertex $v_k$. Therefore, $G[S]$ has no spanning generalized cycle. Hence, by Proposition \ref{gencycle}, $q_k = 0$. %\comment{Could someone check this proof please? there were a few minor errors that I think they're fixed now.}\commentn{Seems ok to me (--Josh)}.
	%FK NOTE: Commented out comments.
\end{proof}

\begin{figure}[t]
\begin{center}
\begin{tikzpicture}
	\foreach \i in {1,2,...,10}
		\node[draw, circle, fill, scale =.5] (\i) at (\i,0) {};
	\foreach \from/\to in {1/2,2/3,3/4,4/5,5/6,6/7,7/8,8/9,9/10}
		\draw[-latex] (\from) to[out=-30,in=210] (\to);
	\foreach \from in {3,5,8}
		\draw[-latex] (\from) to[out=150,in=30] (1);
	\node[] () at (1,-.5) {$v_1$};	
	\node[] () at (5,-.5) {$v_k$};
	\node[] () at (10,-.5) {$v_n$};
	% If you don't want the dots, comment out the next four lines
	\node[draw,fill,white, scale=.6] () at (3.45,-.2) {$\cdots$};
	\node[scale = .7] () at (3.45,-.2) {$\cdots$};
	\node[draw,fill,white, scale=.6] () at (6.45,-.2) {$\cdots$};
	\node[scale = .7] () at (6.45,-.2) {$\cdots$};
\end{tikzpicture}
\caption{An illustration of the construction in Theorem \ref{onenonsymmetric}.}\label{fig.backtrack}
\end{center}
\end{figure}
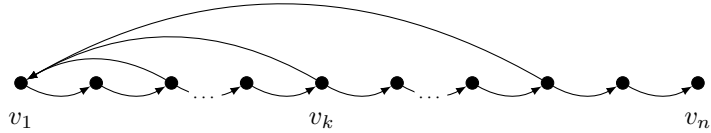

\section{Nonnegative Symmetric Matrices}\label{sectionundericted}
In this section, we consider the principal permanent rank characteristic sequences of nonnegative symmetric matrices. In contrast to general nonnegative matrices, the set of allowable sequences is more restrictive. The key difference between the symmetric and general case is that in the symmetric case a single graph edge always counts as a $2$-cycle. For example, $r_2 = 1$ if the underlying graph {has} an edge. Moreover, since every even cycle contains a perfect matching, this implies that we may always choose a generalized cycle where all even cycles are $2$-cycles. {That is, if $G$ contains a generalized cycle of order $k$, then one realization consists of a (possibility empty) matching and a (possibility empty) collection of odd cycles.}

%That is, if there is a generalized cycle of order $k$, then there is a disjoint union of a matching and some odd cycles of total order $k$.

Ultimately,  in Theorem \ref{fuzzyegg}, we fully characterize which $\ppr$-sequences are realizable by nonnegative symmetric matrices. First, we provide some necessary conditions for a binary sequence to be realizable in Lemmas \ref{lem:zeroout}--\ref{lem:minmaxodd}. 

%\begin{remark} 
%\end{remark}
% FK NOTE: Removed remark and moved it into its own paragraph.

The following lemma shows if there is no generalized cycle of an even order $2k$, then every generalized cycle of the graph is smaller than $2k$.

\begin{lem}\label{lem:even}
	Suppose $A$ is a nonnegative $n \times n$ symmetric matrix, and let $ppr(A) = r_0 r_1 r_2 \ldots r_{n}$.  If $r_{2k} = 0$ for some $k > 0$, then $r_{j} = 0$ for all $j \geq 2k$.  
\end{lem}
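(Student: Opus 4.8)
The plan is to translate the statement into the language of generalized cycles and to prove the contrapositive. Writing $G$ for the underlying (undirected) graph of the symmetric matrix $A$, Proposition \ref{gencycle} tells us that the implication ``$r_{2k}=0 \Rightarrow r_j=0$ for all $j \ge 2k$'' is equivalent to ``if $G$ has a generalized cycle of some order $j \ge 2k$, then $G$ has a generalized cycle of order exactly $2k$.'' So I would fix a generalized cycle $H$ of order $j \ge 2k$ and extract from it a generalized cycle of order $2k$. Throughout, I rely on the normal form noted just before the lemma: since $A$ is symmetric, $H$ may be taken to consist of a matching together with a disjoint collection of odd cycles, because every even cycle can be replaced by a perfect matching of $2$-cycles on the same vertex set without changing the order.

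The engine of the argument is a single reduction step, which I would isolate as a claim: if $G$ has a generalized cycle of order $\ell$, then $G$ has one of order $\ell-1$ whenever $\ell$ is odd, and one of order $\ell-2$ whenever $\ell$ is even with $\ell \ge 2$. Two elementary operations suffice to prove this. First, deleting one matching edge (a $2$-cycle) lowers the order by $2$. Second, replacing an odd cycle of size $c \ge 3$ by a near-perfect matching on $c-1$ of its vertices (discarding one vertex) lowers the order by $1$ and again yields a valid generalized cycle. The parity of the order of $H$ equals the parity of the number of odd cycles appearing in its normal form, and this is the bookkeeping that drives the two cases.

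With these operations, the cases resolve as follows. If $\ell$ is odd, the normal form contains an odd number of odd cycles, hence at least one; applying the second operation to it gives order $\ell-1$. If $\ell$ is even and $\ell \ge 2$, then either $H$ contains a matching edge, which I delete to reach order $\ell-2$, or $H$ consists solely of odd cycles, in which case their number is even and therefore at least two, so applying the second operation to two of them lowers the order by $2$. Iterating the reduction, I would first (if $j$ is odd) pass to the even order $j-1$, which is still $\ge 2k$, and then repeatedly reduce by $2$ through even orders, each at least $2k$, until reaching $2k$ exactly. This produces a generalized cycle of order $2k$, so $r_{2k}=1$, completing the contrapositive.

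The step I expect to require the most care is the all-odd-cycles case of the even reduction: I must verify that when no $2$-cycle is present the number of odd cycles has the correct (even) parity, so that at least two odd cycles are genuinely available to absorb the decrease of $2$, and that the two resulting near-perfect matchings are disjoint from each other and from the rest of $H$, hence still form a legitimate generalized cycle. The remaining bookkeeping—that the target $2k$ is hit and not skipped—is immediate, since every reduction in the even regime changes the order by exactly $2$, $2k$ is even, and $2k \le j$; and since $k>0$ forces $2k \ge 2$, the reduction step always applies until we stop at $2k$.
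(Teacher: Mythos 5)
Your proposal is correct and follows essentially the same route as the paper: both reduce to generalized cycles via Proposition \ref{gencycle}, use the normal form (matching plus disjoint odd cycles), and apply the same two operations---deleting a matching edge, or discarding one vertex from each of one or two odd cycles and converting the resulting even paths to matchings---with the same parity bookkeeping on the number of odd cycles. The only difference is presentational: you package the argument as an explicit iterated reduction step, whereas the paper states the single-step implications ($r_{2t+1}=1$ or $r_{2t+2}=1$ implies $r_{2t}=1$) and lets the induction be implicit.
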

\begin{proof}
	Recall that Proposition \ref{gencycle} asserts that $r_{j} = 1$ if and only if the underlying graph, $G$, has a generalized cycle on $j$ vertices. 
	First, suppose $r_{j} = 1$ for some odd $j = 2t+1$. Then there exists a generalized cycle of $G$ consisting of at least one odd cycle, along with (possibly) a matching. Discarding an arbitrary vertex from this odd cycle results in a path on an even number of vertices. This path contains a spanning matching.  When this matching is considered with the other components of the original odd generalized cycle, we have a generalized cycle on $j-1$ vertices. Thus, $r_{j-1} = 
	%r_{2t} = 
	1$.

	Now, suppose that $r_{j}=1$ for some $j=2t$.  Then there is a generalized cycle $C$ of order $j$ consisting of a (possibly empty) collection of odd cycles plus a (possibly empty) matching.  If $C$ contains a matching edge, then discarding it yields a generalized cycle on $2t-2$ vertices. Hence, $r_{j-2} = r_{2t-2} = 1$.  Otherwise, $C$ consists solely of an even number of odd cycles.  Discarding one vertex each from two different odd cycles, and noting again that the remaining even paths contain a spanning matching, yields a generalized cycle on $2t-2$ vertices. That is, $r_{2t-2} = 1$.  

	Therefore if $r_{2t+2} = 1$, or $r_{2t+1} = 1$, we have that $r_{2t} = 1$, and this implies the lemma.
%	\comment{someone please proofread this.}\commentn{Seems ok to me (--Josh)}
%FK NOTE: commented out comment
\end{proof}

{The proof of Lemma \ref{lem:even}  further demonstrates that if an even generalized cycle exists, then there is a generalized cycle for all smaller even orders. Thus, we are left to study the restriction that odd cycles impose on the $\ppr$-sequence. In Lemma \ref{lem:oddspan}, we show that the odd indices $i$ for which $r_i = 1$ must be sequential}; however, first we make a few structural observations.

\begin{fact} \label{lem.oddgirth}
Let $2\ell + 1$ be the length of the shortest odd cycle %the odd girth 
of $G$, then $2 \ell + 1$ is the smallest odd integer $k$ with $r_{k} = 1$.
\end{fact}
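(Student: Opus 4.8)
The plan is to prove the two inequalities separately: that $r_{2\ell+1} = 1$, and that $r_k = 0$ for every odd $k < 2\ell+1$. Throughout I would use Proposition~\ref{gencycle}, which replaces the condition ``$r_k = 1$'' by the purely combinatorial condition ``$G$ has a generalized cycle of order $k$''. Since a generalized cycle is a disjoint union of (directed) cycles in $G$ — which in the symmetric case are loops, $2$-cycles, and longer cycles — the whole argument reduces to a parity count on the lengths of these cycle components, where a permutation cycle of length $m$ contributes a cycle on $m$ vertices and the lengths of all components sum to the order of the generalized cycle.

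For the first direction, I would simply observe that the shortest odd cycle $C$, which by hypothesis has length $2\ell+1$, is itself a generalized cycle of order $2\ell+1$ (take $\pi_C$ to be a single cyclic permutation of $V(C)$). Hence by Proposition~\ref{gencycle}, $r_{2\ell+1} = 1$, so the smallest odd index $k$ with $r_k = 1$ is at most $2\ell+1$.

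For the converse I would argue by contradiction: suppose $r_k = 1$ for some odd $k < 2\ell+1$. Then $G$ has a generalized cycle of order $k$, whose disjoint cycle components have lengths summing to $k$. Since $k$ is odd and any sum of even numbers is even, at least one component $C'$ must have odd length. Then $C'$ is an odd cycle of $G$ with length at most $k < 2\ell+1$, contradicting the minimality of $2\ell+1$ as the length of the shortest odd cycle. Thus no odd $k < 2\ell+1$ satisfies $r_k = 1$, and together with the first direction this identifies $2\ell+1$ as exactly the smallest odd $k$ with $r_k = 1$.

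The argument is short, and I do not expect a serious obstacle; the only point requiring care is the treatment of loops. A loop is a generalized cycle of order $1$, i.e.\ an odd cycle of length $1$, so adopting the convention that an ``odd cycle'' includes loops keeps the statement correct even when $G$ has a nonzero diagonal entry: in that boundary case $2\ell+1 = 1$ and indeed $r_1 = 1$. Under this convention the parity count goes through unchanged, since a loop contributes odd length $1$ to the sum of component lengths.
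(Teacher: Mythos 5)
Your proof is correct; the paper states this as a \emph{Fact} with no proof at all, and your parity argument (an odd-order generalized cycle must contain an odd permutation-cycle component, which is an odd cycle or loop of $G$) is exactly the reasoning the paper leaves implicit, consistent with its surrounding discussion of generalized cycles as matchings plus odd cycles and its treatment of loops as odd cycles of length $1$ in the Case~4 construction. Your explicit handling of the loop boundary case $2\ell+1=1$ is a worthwhile addition rather than a deviation.
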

%
%\begin{proof}
%Since any 
%\end{proof}

\begin{lem}\label{lem:struct}
	Suppose $r_{2k-1} = 0$ and $r_{2k+1} = 1$. Then every generalized cycle on $2k+1$ vertices is a $2k+1$ cycle, and the vertex set of that generalized cycle induces a cycle with no chords.  
\end{lem}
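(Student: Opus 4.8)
The plan is to work entirely through Proposition \ref{gencycle}, translating each $r_i$ into the (non)existence of a generalized cycle of order $i$, and to reuse the decomposition established around Lemma \ref{lem:even}: every generalized cycle of a symmetric nonnegative matrix can be realized as a matching $M$ together with a collection of odd cycles. The two order-lowering moves from the proof of Lemma \ref{lem:even} are my main tools: (i) deleting a single matching edge ($2$-cycle) drops the order of a generalized cycle by $2$, and (ii) deleting one vertex from each of two distinct odd cycles and replacing the resulting even paths by spanning matchings also drops the order by $2$. Since $r_{2k-1}=0$, Proposition \ref{gencycle} forbids any generalized cycle on $2k-1$ vertices, so whenever either move applies to a generalized cycle on $2k+1$ vertices we reach a contradiction.

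First I would prove the first assertion. Fix any generalized cycle $C$ on $2k+1$ vertices (it exists because $r_{2k+1}=1$) and write it as a matching $M$ plus odd cycles $O_1,\dots,O_m$. Counting vertices, $2|M|+\sum_i|O_i|=2k+1$ is odd, which forces $m$ to be odd and in particular $m\ge 1$. If $M\neq\emptyset$, then move (i) yields a generalized cycle on $2k-1$ vertices, contradicting $r_{2k-1}=0$; hence $M=\emptyset$. If $m\ge 3$ (the next odd value after $1$), then move (ii) applied to $O_1,O_2$ again produces a generalized cycle on $2k-1$ vertices, another contradiction. Therefore $m=1$ and $M=\emptyset$, so $C$ is a single odd cycle covering all $2k+1$ vertices, i.e. a $(2k+1)$-cycle.

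Next I would show that the vertex set $S$ of this cycle is chordless (and loopless). Suppose $G[S]$ carries an extra edge. A loop at a vertex $v$ gives the generalized cycle consisting of that $1$-cycle together with a perfect matching of the even path $C-v$; this contains a matching edge, so move (i) drops the order to $2k-1$, a contradiction. A genuine chord $v_iv_j$ splits $C$ into two arcs; since $v_i,v_j$ are nonadjacent on the cycle, each arc has at least two edges, say $d$ and $2k+1-d$ edges with $2\le d\le 2k-1$. The chord together with one arc forms a sub-cycle of length $d+1$, and exactly one of the two sub-cycles is odd, since their lengths sum to $2k+3$. Choosing the odd one, we get $d$ even, so $2\le d\le 2k-2$ and the sub-cycle has odd length $d+1$ with $3\le d+1\le 2k-1$; meanwhile the $2k-d\ge 2$ interior vertices of the other arc form an even path with a nonempty spanning matching. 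This odd sub-cycle plus that matching is a generalized cycle on all $2k+1$ vertices containing a matching edge, so move (i) once more yields order $2k-1$, contradicting $r_{2k-1}=0$. Hence $S$ spans exactly the $(2k+1)$-cycle with no chords.

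The main obstacle is the parity bookkeeping in the chord case: one must verify that the odd cycle produced by a chord is a \emph{proper} sub-cycle (length strictly less than $2k+1$) and that the complementary interior is nonempty and of even size, so that it genuinely supplies a matching edge for move (i). Once the inequalities $2\le d\le 2k-2$ are pinned down from the fact that $v_iv_j$ is a chord rather than a cycle edge, both assertions reduce to the reduction moves already justified in Lemma \ref{lem:even}.
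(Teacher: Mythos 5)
Your proof is correct and follows essentially the same route as the paper: decompose the generalized cycle into a matching plus odd cycles, rule out a matching edge and a second odd cycle by the two order-lowering moves from Lemma \ref{lem:even}, and handle a chord by forming the odd sub-cycle through the chord plus a matching on the remaining even path and then deleting a matching edge. Your version merely adds the explicit parity bookkeeping for the chord case (and the loop case), which the paper leaves implicit.
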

\begin{proof}
	Consider a generalized cycle on $2k+1$ vertices. {As noted before, we can choose a generalized cycle consisting} of a collection of odd cycles plus a matching. If there is an edge in the matching, however, discarding it yields a generalized cycle on $2k-1$ vertices, that is, $r_{2k-1} = 1$. Thus, the generalized cycle is a collection of odd cycles. {If there is more than one odd cycle in the collection}, one vertex {can be} discarded from two different cycles, and a perfect matching can be taken from the resulting even paths to find a generalized cycle on $2k-1$ vertices.  Thus, assuming $r_{2k-1}=0$, the generalized cycle is a single cycle. 
	
	{Now let $V$ be the vertex set for some generalized cycle of order $2k+1$}. The set $V$ induces a $2k+1$ cycle, along with potentially some chords.  If there is a chord, however, {$G[V]$} also consists of a smaller odd cycle (containing the chord) plus an even path {on the remaining vertices} containing at least one edge.  Converting this path to a matching and discarding an edge would yield a generalized cycle on $2k-1$ vertices, completing the proof of the lemma.
\end{proof}

\begin{lem}\label{lem:oddspan}
	Suppose $r_{2i+1} = r_{2k+1} = 1$ for some integers $i < k$, then $r_{t} = 1$ for all $2i+1 \leq t \leq 2k+1$.  
\end{lem}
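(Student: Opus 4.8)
The plan is to treat the even and odd indices in the range separately, arguing throughout in terms of generalized cycles of the underlying graph $G$ via Proposition~\ref{gencycle}. For the even indices, note first that $r_{2k+1}=1$ gives a generalized cycle of order $2k+1$, which (being odd) contains at least one odd cycle; deleting a single vertex of that odd cycle leaves an even path with a perfect matching, so $r_{2k}=1$. The strengthening of Lemma~\ref{lem:even} recorded after its proof then yields $r_{2j}=1$ for every $1\le j\le k$, which covers all even $t$ with $2i+1\le t\le 2k+1$. It therefore remains only to handle the odd indices, and for these it suffices to prove that the set $\mathcal{O}=\{\,t\text{ odd}: r_t=1\,\}$ is an interval of odd numbers. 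Indeed, by Fact~\ref{lem.oddgirth} the least element of $\mathcal{O}$ is the odd girth $g$, so $r_{2i+1}=1$ forces $g\le 2i+1$; once we know $\mathcal{O}$ has no gaps, $r_{2k+1}=1$ gives $r_t=1$ for every odd $t\in[g,2k+1]\supseteq[2i+1,2k+1]$.

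To show $\mathcal{O}$ has no gaps it is enough to prove: if $r_{2k+1}=1$ and $r_{2k-1}=0$, then $g=2k+1$ (so nothing smaller is realizable and no gap can occur). Suppose instead that $r_{2k+1}=1$, $r_{2k-1}=0$, and $G$ contains an odd cycle shorter than $2k+1$; let $O^{\ast}$ be a shortest odd cycle, which is necessarily chordless, of length $g<2k+1$. By Lemma~\ref{lem:struct} every generalized cycle of order $2k+1$ is a chordless $(2k+1)$-cycle; fix one and call it $Z$. I will derive a contradiction by exhibiting a generalized cycle of order $2k+1$ that is not a single chordless cycle. When $V(O^{\ast})\cap V(Z)=\emptyset$ this is immediate: $Z$ is a $(2k+1)$-cycle disjoint from $O^{\ast}$, hence contains a matching covering any prescribed even number of its vertices, in particular the even number $2k+1-g\le 2k$; taking $O^{\ast}$ together with such a matching yields a generalized cycle of order $g+(2k+1-g)=2k+1$ that consists of the odd cycle $O^{\ast}$ of length $g<2k+1$ plus at least one matching edge, contradicting Lemma~\ref{lem:struct}.

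The main obstacle is the case $V(O^{\ast})\cap V(Z)\neq\emptyset$, where a matching drawn from $Z$ must avoid the shared vertices and may no longer cover enough of $Z$. To handle it I would analyze how $O^{\ast}$ attaches to the induced cycle $Z$: pick a vertex $w\in V(O^{\ast})\setminus V(Z)$ (one exists, since otherwise $O^{\ast}$ would be a proper cyclic subset of the chordless cycle $Z$), and let $B$ be the maximal subpath of $O^{\ast}$ through $w$ whose interior avoids $Z$, with endpoints $x,y\in V(Z)$. Then $B$ together with the two arcs $Z_1,Z_2$ of $Z$ between $x$ and $y$ forms a theta subgraph, and since $|Z_1|+|Z_2|=2k+1$ is odd, exactly one of the cycles $B\cup Z_1$, $B\cup Z_2$ is odd. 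Choosing that odd cycle and taking a perfect matching on the interior of the complementary arc should produce either a generalized cycle of order $2k-1$ (contradicting $r_{2k-1}=0$) or a non-chordless generalized cycle of order $2k+1$ (contradicting Lemma~\ref{lem:struct}); the length bound $|B|<g<2k+1$, refined using that $O^{\ast}$ is a \emph{shortest} odd cycle in order to compare $|B|$ with the arc lengths of $Z$, is what guarantees that the required parities and inequalities hold. Carrying out this parity-and-length bookkeeping is the technical heart of the argument, after which the contradiction closes the proof.
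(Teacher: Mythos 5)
Your overall skeleton is the same as the paper's: reduce to the odd indices via Lemma~\ref{lem:even}, reduce further to showing that $r_{2k+1}=1$ and $r_{2k-1}=0$ force the absence of any shorter odd generalized cycle, invoke Lemma~\ref{lem:struct} to make the order-$(2k+1)$ generalized cycle an induced cycle $Z$, take a shortest odd cycle $O^{*}$ (the paper instead takes a minimum-order odd generalized cycle, which by Lemma~\ref{lem:struct} is the same kind of object), and handle the disjoint case by padding $O^{*}$ with a matching from $Z$. All of that is correct. The problem is the intersecting case, which you yourself flag as ``the technical heart'': the construction you propose there does not work, and it is not merely bookkeeping that is missing. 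You propose to take the odd cycle of the theta graph formed by an ear $B$ of $O^{*}$ together with the two arcs $Z_1,Z_2$ of $Z$, and then pad it with a matching on the complementary arc. But that odd cycle can be longer than $2k+1$, and a generalized cycle consisting of an odd cycle plus a matching can only be shrunk down to the length of its odd cycle; so it may never reach order $2k+1$ or $2k-1$. Concretely: if $O^{*}$ meets $Z$ in exactly two vertices $x,y$ adjacent on $Z$ and the ear $B$ has odd length $b\ge 3$, then $Z_1=xy$ gives the even cycle $B\cup Z_1$ and the unique odd theta-cycle is $B\cup Z_2$ of length $b+2k\ge 2k+3$; nothing useful can be extracted from it. The ``shortest odd cycle'' property of $O^{*}$ only gives lower bounds on odd cycle lengths, so it cannot cap $b+2k$ from above.

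The repair in such configurations is to abandon the theta-cycles and instead keep $O^{*}$ intact, adding a matching drawn from the arcs of $Z$ that avoid $V(O^{*})$. Making this work when $O^{*}$ meets $Z$ in several segments requires exactly the counting the paper does: if $O^{*}$ follows $Z$ on $s$ contiguous segments sharing $\ell$ vertices, then $s+\ell\le g$, the complementary arcs of $Z$ carry a matching on at least $2k+1-\ell-s$ vertices, and combining gives a generalized cycle of order at least $g+(2k+1)-\ell-s\ge 2k+1$ consisting of a $g$-cycle plus a nonempty matching, from which one descends to order $2k-1$. So the intersecting case of your argument is a genuine gap whose correct resolution is essentially the paper's proof rather than a refinement of your theta-graph step. (Two smaller omissions: the degenerate case $|V(O^{*})\cap V(Z)|=1$, where $x=y$ and there is no theta graph, needs its own one-line treatment; and when loops are present the odd girth can be $1$, which your cycle language should accommodate.)
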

\begin{proof}
	By Lemma \ref{lem:even} it suffices to just consider $r_t$ for odd $t$. %\comment{Isn't the rest of the proof just a result of the previous lemma? I mean, if there is a zero in between pick the largest index and by previous lemma everything smaller than that would be zero, contradicting the assumption that $r_{2i+1} =1$. I haven't read the following proof recently and I might be quite wrong. But we might be proving something more than what's in the statement?}\commentn{I don't think it follows immedaitely from the previous lemma, but maybe it does?  I don't think this proof gives anything more than what was stated.(--Josh)}\\
	%%FK NOTE: Commented comments
	
	It also suffices to show that $r_{2k-1} = 1$.  Suppose to the contrary that $r_{2k-1} = 0$.  By Lemma \ref{lem:struct}, every generalized cycle  of size $2k+1$ is an induced cycle.  Fix such a generalized cycle on vertex set $V$.  Suppose $j < k$ is minimum with the property that $r_{2j-1} = 1$.  Again, fix a generalized cycle with size $2j-1$. This is also an (induced) cycle on a vertex set $V'$. 

	If $V' \cap V = \emptyset$, then we are done: discarding a vertex from $V$ we have a path on $2k$ vertices, and a cycle on $2j - 1$ vertices.  This path can be treated as a matching, and discarding sufficiently many edges in the matching yields a generalized cycle of size ${2k-1}$.  Otherwise, we may assume that the cycle on $V'$ follows along the cycle on $V$ on $s$ contiguous segments {sharing a total of} $\ell$ vertices.  Immediately following each segment there must be at least one vertex in $V' \setminus V$, so $s + \ell \leq 2j-1$.  The vertices in $V$ {\it not} in $V'$ lie on $s$ segments with total length $2k+1 - \ell.$  For parity reasons, we may have to delete one vertex from each segment, but we can then obtain a matching on {at least}  $$2k+1-\ell - s$$ vertices.  Combining this matching with the cycle on $2j-1$ vertices,  we have a generalized cycle on $$2k+1 + (2j-1) - \ell - s \geq 2k + 1$$ vertices comprised of a cycle on $2j-1$ vertices plus a matching.  Discarding sufficiently many edges of the matching, we again obtain a generalized cycle of size $2k-1$.
\end{proof}

Finally, the following lemma shows that the largest odd generalized cycle of a graph strongly constrains the largest even generalized cycle.

\begin{lem}\label{lem:minmaxodd}
	Suppose $m$ and $M$ are respectively the smallest and largest odd integers so that $r_m = r_{M} = 1$. Assuming $m+M+2 \le n$ then $r_{m+M+2} = 0$.
\end{lem}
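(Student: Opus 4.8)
The plan is to argue by contradiction: assuming a generalized cycle of order $m+M+2$ exists, I will manufacture an \emph{odd} generalized cycle whose order lies strictly between $M$ and $n$, contradicting the maximality of $M$ (recall $M$ is the largest odd index with $r_M=1$, and the hypothesis $m+M+2\le n$ guarantees that the orders I produce are at most $n$, so the corresponding entries of the sequence are genuinely defined). By Proposition \ref{gencycle}, $r_{m+M+2}=1$ means the underlying graph $G$ has a generalized cycle $D$ on $m+M+2$ vertices. Since $m$ and $M$ are odd, $m+M+2$ is even, and using the canonical form available in the symmetric case I may write $D$ as a disjoint union of a (possibly empty) matching together with a collection of odd cycles; because the total order is even, the number of odd cycles is even.

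First I would dispose of the case where $D$ contains at least one odd cycle (hence, by parity, at least two). Deleting a single vertex from one such odd cycle turns it into an even path, which carries a perfect matching; absorbing this matching into $D$ produces a generalized cycle on $(m+M+2)-1=m+M+1$ vertices that still contains an odd number of odd cycles, hence has odd order. Thus $r_{m+M+1}=1$ with $m+M+1$ odd and $m+M+1>M$ (as $m\ge 1$), contradicting the choice of $M$. This is exactly the vertex-deletion move already exploited in Lemmas \ref{lem:even} and \ref{lem:oddspan}.

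The remaining, and genuinely harder, case is when $D$ is a pure matching, so $G[V(D)]$ contains no odd cycle at all and I must import one from elsewhere. By Fact \ref{lem.oddgirth} the hypothesis $r_m=1$ supplies an actual odd cycle $G_m$ of order $m$, on a vertex set $A$. The matching $D$ has $\tfrac{m+M+2}{2}$ edges, and at most $m$ of them can meet the $m$ vertices of $A$; hence at least $\tfrac{m+M+2}{2}-m=\tfrac{M-m+2}{2}$ edges of $D$ are disjoint from $A$. Taking $G_m$ together with $\tfrac{M-m+2}{2}$ of these disjoint edges yields a generalized cycle (one odd cycle plus a matching) on $m+(M-m+2)=M+2$ vertices, which is odd and exceeds $M$, again a contradiction.

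The main obstacle is precisely this pure-matching case: unlike in the earlier lemmas, the odd cycle needed to control parity is not present inside the large generalized cycle, so the argument must splice the maximal even structure together with the \emph{girth} cycle guaranteed by $r_m=1$, and the edge count $\tfrac{M-m+2}{2}$ is tight (equality occurs when every matching edge meeting $A$ uses a distinct vertex of $A$), so the bookkeeping must be handled with care. Once both cases produce a forbidden odd order, Proposition \ref{gencycle} gives $r_{m+M+2}=0$, as claimed.
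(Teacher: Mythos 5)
Your proof is correct and takes essentially the same route as the paper's: both split on whether the even generalized cycle of order $m+M+2$ contains an odd cycle (in which case deleting one vertex yields an odd generalized cycle of order $m+M+1>M$) or is a pure matching (in which case splicing in the order-$m$ odd cycle and counting the $\tfrac{M-m+2}{2}$ matching edges disjoint from it yields an odd generalized cycle of order $M+2$). The only difference is presentational — the paper bounds the largest even index $t$ by $t\le M+m$, while you argue by contradiction directly at $t=m+M+2$.
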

\begin{proof}
	Let $t$ be the largest even number so that $r_t = 1$, and consider a generalized cycle $C_1$ on $t$ vertices.  We claim that if $t > M+1$ then there is a matching on $t$ vertices. Indeed, if the generalized cycle on $t$ vertices contains an odd cycle %\comment{Is this needed?--and hence, by parity considerations, at least two odd cycles--} 
	%% FK NOTE: commented comment
	a single vertex can be discarded from one odd cycle to obtain a generalized cycle on $t-1$ vertices, and hence $t-1 \leq M$.  (Note that it is possible that $t-1 < M$, if the largest generalized cycle in the graph had odd order.)

	Thus, we may assume that the generalized cycle $C_1$ consists of $\frac{t}{2}$ disjoint edges.  Consider a generalized cycle $C_2$ on $m$ vertices, which may include vertices from at most $m$ of the disjoint edges of $C_1$. Taking $C_2$ along with the edges of $C_1$ not containing a vertex from it yields an odd generalized cycle on at least $m + 2(\frac{t}{2} - m) = t - m$ vertices.  Therefore $M \geq t-m$. Rearranging, we get $t \leq M + m$, which proves the lemma.  
\end{proof}

The following Theorem shows that the above necessary conditions on the ppr-sequence of a nonnegative symmetric matrix are indeed sufficient. That is, if a binary sequence $r_0 r_1 \ldots r_n$ satisfies the conditions of Lemmas \ref{lem:even}--\ref{lem:minmaxodd}, then there is a nonnegative symmetric matrix whose ppr-sequence is $r_0 r_1 \ldots r_n$.

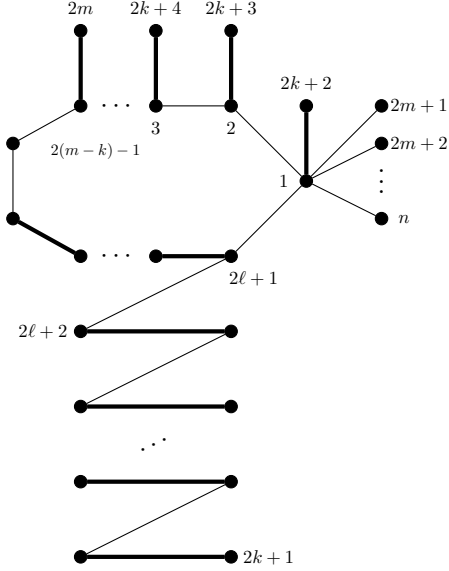
\begin{figure}[h]
\begin{center}
\begin{tikzpicture}
	\node[draw, circle, fill, scale =.5] (1) at (7,0) {};
	\node[scale =.7] () at (6.7,0) {$1$};
	
	\node[draw, circle, fill, scale =.5] (1a) at (8,1) {};
	\node[scale =.7] () at (8.5,1) {$2m+1$};
	\node[draw, circle, fill, scale =.5] (1b) at (8,.5) {};
	\node[scale =.7] () at (8.5,.5) {$2m+2$};
	\node[] () at (8,.1) {$\vdots$};
	\node[draw, circle, fill, scale =.5] (1c) at (8,-.5) {};
	\node[scale =.7] () at (8.3,-.5) {$n$};
	
	\node[draw, circle, fill, scale =.5] (2) at (6,1) {};
	\node[scale =.7] () at (6,.7) {$2$};
	
	\node[draw, circle, fill, scale =.5] (3) at (5,1) {};
	\node[scale =.7] () at (5,.7) {$3$};
	
	\node[draw, circle, fill, scale =.5] (4) at (4,1) {};
	\node[scale =.6] () at (4.2,.4) {$2(m-k)-1$};

	\node[draw, circle, fill, scale =.5] (5) at (3.1,.5) {};
	
	\node[draw, circle, fill, scale =.5] (5a) at (3.1,-.5) {};
	
	\node[draw, circle, fill, scale =.5] (6) at (4,-1) {};
	
	\node[draw, circle, fill, scale =.5] (7) at (5,-1) {};
	
	\node[draw, circle, fill, scale =.5] (8) at (6,-1) {};
	\node[scale =.7] () at (6.3,-1.3) {$2\ell+1$};
	
	\node[draw, circle, fill, scale =.5] (111) at (7,1) {};
	\node[scale =.7] () at (7,1.3) {$2k+2$};
	\node[draw, circle, fill, scale =.5] (222) at (6,2) {};
	\node[scale =.7] () at (6,2.3) {$2k+3$};	
	\node[draw, circle, fill, scale =.5] (333) at (5,2) {};
	\node[scale =.7] () at (5,2.3) {$2k+4$};
	\node[draw, circle, fill, scale =.5] (444) at (4,2) {};
	\node[scale =.7] () at (4,2.3) {$2m$};
	
	\draw[] (1a)--(1)--(1c);
	\draw[] (1)--(1b);
	\draw[] (111)--(1)--(2)--(3)--(333);
	\draw[] (2)--(222);
	\draw[] (444)--(4)--(5)--(5a)--(6);
	\node[] () at (4.5,1) {$\cdots$};
	\draw[] (7)--(8)--(1);
	\node[] () at (4.5,-1) {$\cdots$};
	
	\node[draw, circle, fill, scale =.5] (9) at (4,-2) {};
	\node[scale =.7] () at (3.5,-2) {$2\ell+2$};
	\node[draw, circle, fill, scale =.5] (10) at (6,-2) {};
	\node[draw, circle, fill, scale =.5] (11) at (4,-3) {};
	\node[draw, circle, fill, scale =.5] (12) at (6,-3) {};
	
	\node[draw, circle, fill, scale =.5] (13) at (4,-4) {};
	\node[draw, circle, fill, scale =.5] (14) at (6,-4) {};
	\node[draw, circle, fill, scale =.5] (15) at (4,-5) {};
	\node[draw, circle, fill, scale =.5] (16) at (6,-5) {};
	\node[scale =.7] () at (6.5,-5) {$2k+1$};
	
	\draw[] (8)--(9)--(10)--(11)--(12);
	\node[rotate=25] () at (5,-3.5) {$\cdots$};	
	\draw[] (13)--(14)--(15)--(16);

	\draw[ultra thick] (1)--(111);
	\draw[ultra thick] (2)--(222);
	\draw[ultra thick] (3)--(333);
	\draw[ultra thick] (4)--(444);
	\draw[ultra thick] (5a)--(6);	
	\draw[ultra thick] (7)--(8);	
	\draw[ultra thick] (9)--(10);	
	\draw[ultra thick] (11)--(12);	
	\draw[ultra thick] (13)--(14);	
	\draw[ultra thick] (15)--(16);	
	
%	\def \n {10}
%	\def \radius {2.5cm}
%	\def \margin {8} % margin in angles, depends on the radius
%
%	\foreach \s in {1,...,\n}
%	{
%		\node[draw, circle, fill, scale =.5] () at ({360/\n * (\s - 1)}:\radius/3*2) {};
%		\draw[-] ({360/\n * (\s - 1)}:\radius/3*2) -- ({360/\n * (\s)}:\radius/3*2);
%		
%		\node[draw, circle, fill, scale =.5] () at ({360/\n * (\s - 1)}:\radius) {};
%		\draw[-] ({360/\n * (\s)}:\radius/3*2) -- ({360/\n * (\s)}:\radius);
%	}
%	
%	\draw[-] ({360/\n * (2 - 1)}:\radius/3*2) -- ({360/\n * (4)}:\radius/3*2);
%	
%	\node[scale = .7] () at ({360/\n * (2 - 1)}:\radius/2) {$v_1$};
%	\node[scale = .7] () at ({360/\n * (3 - 1)}:\radius/2) {$v_2$};
%	
%	\node[scale = .7] () at ({360/\n * (5 - 1)}:\radius/2) {$v_{2\ell +1}$};
%	
%	\node[scale = .7] () at ({360/\n * (10 - 1)}:\radius/2) {$v_{2k}$};
%	\node[scale = .7] () at ({360/\n * (1 - 1)}:\radius/2) {$v_{2k+1}$};
\end{tikzpicture}
\caption{An illustration of the construction of Case 4 in the proof of Theorem \ref{fuzzyegg}.} \label{fig.fuzzyegg}%\comment{do we need all the \textit{fuzz}es in the picture or some?}
\end{center}
\end{figure}

\begin{thm}
%[Fuzzy Egg Theorem] 
%% Okay, it can't be called the fuzzy Egg Theorem, but I am not changing all of the labels. FK - 7/15
\label{fuzzyegg}
Any binary sequence not discounted by Lemmas \ref{lem:zeroout}--\ref{lem:minmaxodd} is realizable by a symmetric nonnegative matrix.

That is, $r_0 r_1 \ldots r_n$ is realizable as a ppr-sequence of a nonnegative symmetric matrix if and only if
\begin{itemize}
\item $r_0 = 0$ and $r_i = 1$ for $i=1,2,\ldots, n$; or
\item there are nonnegative integers $\ell, k, m$ with $\ell \le k \leq m \le \ell + k + 1$ where
	\begin{enumerate}[a)]
		\item $r_{2j+1} = 0$ for any $j < \ell$,
		\item $r_{2j+1} = 1$ for any $j$ with $ \ell \le j \le k$,
		\item $r_{2j+1} = 0$ for any $j$ with $k < j  \leq \frac{n-1}{2}$,%\comment{ $\leq \frac{n-1}{2}$ Shall we omit this?}$,
		\item $r_{2j} = 1$ for any $0 \leq j \leq m$, and
		\item $r_{2j} = 0$ for any or $m < j  \leq \frac{n-1}{2}; or$ %comment{ $\leq \frac{n}{2}$ Shall we omit this?}$,
	\end{enumerate}
\item $r_0 = 1$,  $r_{i} = 0$ for all odd $i \le n$, $r_{i} = 1$ for all even $i \le 2m$ and $r_{i} = 0$ for all  even $i > 2m$ for some nonnegative $m \le \lfloor \frac{n-1}{2} \rfloor.$\end{itemize}
\end{thm}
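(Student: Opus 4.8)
The plan is to read the three displayed bullets as nothing more than a restatement of ``the sequence survives every one of Lemmas \ref{lem:zeroout}--\ref{lem:minmaxodd},'' so that the \emph{necessity} direction needs no new work: one only checks that a sequence satisfying all of those lemmas falls into exactly one of the three families. For instance, when $r_0=1$ and some odd entry is $1$, Lemma \ref{lem:oddspan} makes the odd $1$'s an interval $[2\ell+1,2k+1]$, Lemma \ref{lem:even} makes the even $1$'s an interval $[0,2m]$, the existence of a generalized $(2k+1)$-cycle forces $r_{2k}=1$ (delete one vertex of the odd cycle and match the resulting even path) hence $k\le m$, and Lemma \ref{lem:minmaxodd} gives $m\le \ell+k+1$; when all odd entries vanish one lands in the third bullet, and when $r_0=0$ in the first. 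Everything then reduces to \emph{sufficiency}, and by Propositions \ref{zerononzero} and \ref{gencycle} it suffices, for each admissible sequence, to build an undirected graph $G$ (loops allowed) on $n$ vertices whose set of generalized-cycle orders is exactly $\{k\ge 1:\ r_k=1\}$ and which has a loopless vertex precisely when $r_0=1$.

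The two extreme families are immediate. The first bullet is realized by $I_n$ as in Lemma \ref{lem:zeroout}. For the third bullet I would take $G$ to consist of $m$ disjoint edges together with $n-2m$ isolated vertices and no loops: its generalized cycles are exactly the sub-matchings, giving even orders $2,4,\dots,2m$ and nothing odd, while the absence of loops yields $r_0=1$.

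The real content is the second bullet, for which I would use the graph of Figure \ref{fig.fuzzyegg}. I start from a chordless odd cycle $C$ of length $2\ell+1$ (read as a single loop when $\ell=0$); this fixes the odd girth and hence the smallest odd $1$. I attach at one vertex of $C$ a pendant path on the $2(k-\ell)$ new vertices $2\ell+2,\dots,2k+1$ carrying a perfect matching of $k-\ell$ edges; combined with $C$ these edges produce odd generalized cycles of every order $2\ell+1,2\ell+3,\dots,2k+1$. Finally I attach $2(m-k)-1$ further pendant edges to \emph{distinct} vertices $1,\dots,2(m-k)-1$ of $C$; this is possible exactly because $m\le \ell+k+1$ forces $2(m-k)-1\le 2\ell+1$. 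These last pendants raise the matching number without creating any new cycle, and I pad to $n$ vertices with isolated vertices.

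The verification is where the work lies, and is the step I expect to be the main obstacle. Because every added vertex off $C$ has degree one and $C$ is chordless, the only odd-length cycle in $G$ is $C$ itself, so each generalized cycle is either a matching (even order) or $C$ together with a matching of the remaining vertices (odd order); this at once rules out short odd cycles and caps the odd orders. It then remains to pin down the maximum matching exactly: matching the $2(m-k)-1$ cycle-attached pendants to their cycle vertices, taking a perfect matching of the remaining arc of $C$ on $2(\ell+k+1-m)$ vertices (nonnegative precisely by $m\le\ell+k+1$), and adding the $k-\ell$ path edges gives a matching of size exactly $m$, so the largest even generalized cycle is $2m$; dually, any odd generalized cycle must use all of $C$, which blocks the cycle-attached pendants and caps its size at $2\ell+1+2(k-\ell)=2k+1$. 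All intermediate even (respectively odd) orders then appear by deleting matching edges (respectively path edges) one at a time. The last things to check are the boundary regimes $\ell=0$ (where $C$ is a loop and one must confirm the remaining loopless vertices still force $r_0=1$, which can fail only for the degenerate tiny cases), $m=k$ (no cycle-attached pendants), and $m=\ell+k+1$ (a pendant on every vertex of $C$), together with the easy observation that padding by \emph{isolated} vertices leaves the matching number, and hence the whole generalized-cycle spectrum, untouched.
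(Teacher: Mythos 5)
Your proposal is correct and follows essentially the same route as the paper: necessity by appeal to Lemmas \ref{lem:zeroout}--\ref{lem:minmaxodd}, sufficiency via the identity matrix, a matching plus isolated vertices, and the unicyclic construction of Figure \ref{fig.fuzzyegg} (odd $(2\ell+1)$-cycle, pendant path of $2(k-\ell)$ vertices, and $2(m-k)-1$ pendant edges on distinct cycle vertices). The only differences are cosmetic: you pad with isolated vertices where the paper attaches the remaining $n-2m$ vertices to vertex $1$, and you fold the paper's Cases 2--4 into one parametrized construction, both of which are harmless.
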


%{\color{blue} Should the last bullet be $r_0 = 1$ since $r_0 = 0 \rightarrow r_i = 1 \hspace{2mm} \forall i \geq 1$?}
\begin{proof}
First note that Lemma \ref{lem:minmaxodd} implies $m \leq k + \ell + 1 $. Also, Fact \ref{lem.oddgirth} implies that for any odd $t$ where $t$ is less than the length of the shortest odd cycle of the graph, then $r_t = 0$. Hence it implies item \textit{a}. Lemma \ref{lem:oddspan} implies for any odd $t$ between the length of the shortest cycle and the length of the largest generalized cycle of the graph, $r_t = 1$. That shows the necessity of item \textit{b}. Lemma \ref{lem:zeroout} asserts $r_0 = 1$, and Lemma \ref{lem:even} implies for even numbers $t$ no more than a fixed number, $r_t = 1$, and for even numbers $t$ more than that fixed number, $r_t = 0$. This implies items \textit{d} and \textit{e}. Now, since Lemmas \ref{lem:zeroout}--\ref{lem:minmaxodd} show the necessity of the conditions above, it suffices to construct a matrix for various cases. 

 \vspace*{1mm}
 \noindent \textbf{Case 1}: $r_0 = 0$ and $r_i = 1$ for $i=1,2,\ldots, n$.

 \noindent This case is covered by Lemma \ref{lem:zeroout} where the identity matrix, $I_n$, realizes the sequence.

 \vspace*{1mm}
 \noindent \textbf{Case 2:} $0 = \ell = k = m$ (i.e., $r_0 = r_1 = 1$ and $r_i = 0$ for $i = 2, 3, \ldots n$).

 \noindent Consider the graph with $n$ isolated vertices where one vertex has a loop. The adjacency matrix has $r_0 =r_ 1= 1$ and $r_i = 0$ otherwise.

 \vspace*{1mm}
 \noindent \textbf{Case 3:} $r_0 = 1$ and $0 < \ell = k = m$.

 \noindent Consider a cycle on $2 \ell + 1$ vertices and $n- 2\ell - 1$ isolated vertices. The only odd generalized cycle is on $2\ell +1$ vertices, and there is a matching on the cycle for all even $2j$ for $j \le \ell$.

 %\noindent \textbf{Case 4:} $r_0 = 1$ and $0 = \ell$ and $\ell, k, m$ are not all equal.

 \vspace*{1mm}
 \noindent \textbf{Case 4:} $r_0 = 1$ and $\ell, k, m$ are not all equal.

 \noindent We construct a graph as follows. Construct an odd cycle on vertices $1,2,\ldots, 2\ell+1$ (if $\ell = 0$, take the odd cycle to be a loop on a single vertex), and a path on vertices $2\ell+1, 2\ell+2, \ldots, 2k+1$. Add $2(m-k)-1$ vertices and connect each of them to one of the vertices $1,2,\ldots, 2(m-k)-1$ such that no pair of them is connected to the same vertex. Finally, add $n-2m$ vertices and connect all of them to vertex 1. See Figure \ref{fig.fuzzyegg}.

 We now verify that items \textit{a}--\textit{e} hold. Items \textit{a}--\textit{c} assert that the smallest and the largest generalized cycles of the graph are to be of sizes $2\ell+1$ and $2k+1$, respectively. Also, \textit{d} and \textit{e} assert that the graph has to have a maximum matching of size $2m$. 

The smallest odd cycle of $G$ is of size $2\ell+1$, hence 

\begin{itemize}
	\item[\textit{a)}] $r_{2j+1} = 0$ for $j<\ell$, and
\end{itemize}

Now, consider the $2\ell+1$ cycle joint with (possibly zero) disjoint edges from the path. This shows there are generalized cycles of length $2j+1$ for $\ell \leq j \leq k$. That is
\begin{itemize}
	\item[\textit{b)}] $r_{2j+1} = 1$ for $2j+1$ for $\ell \leq j \leq k$, and 
	\item[\textit{c)}] $r_{2j+1} = 0$ for any $j$ with $k < j$.
\end{itemize}

Note that the graph has a maximum matching of size $m$. This is obtained by taking the edges that connect each of the vertices $1,2,\ldots, 2(m-k) - 1$ to the pendent vertex adjacent to them ($2(m-k)-1$ edges), every other edge in { the }rest of the $2\ell+1$ cycle ($\frac{2\ell+1 - 2(m-k)-1}{2}$ edges), and the maximum matching from the path ($k-\ell$ edges). %\comment{Yes! It adds up.} 
Thus,
\begin{itemize}
	\item[\textit{d)}] $r_{2j} = 1$ for any $1 \leq j \leq m$, and
	\item[\textit{e)}] $r_{2j} = 0$ for any $m < j$.
\end{itemize}

 \vspace*{1mm}
 \noindent \textbf{Case 5:} $r_0 = 1$,  $r_{i} = 0$ for all odd $i \le n$, $r_{i} = 1$ for all even $i \le 2m$ and $r_{i} = 0$ for all  even $i > 2m$ for some nonnegative $m \le \lfloor \frac{n-1}{2} \rfloor$.
 
% {\color{blue} Here too, should this be $r_0 = 1$?}
 
\noindent This case is obtained with a graph with $m$ disjoint edges and $n-2m$ isolated vertices.

\end{proof}

%Observe that the constructed graph is connected when $k$, $\ell$, and $m$ are not all equal. %\comment{It is easy to see that such a graph cannot be connected when $k = \ell = m < \lfloor n/2 \rfloor$. Can someone check this please?}
%When $k=l=m=\lfloor \frac{n}{2} \rfloor$ the graph becomes an odd cycle. Also, when $k=l=m < \lfloor \frac{n}{2} \rfloor$ the graph consists of a cycle and some disjoint vertices.
%The lemmas in this section demonstrate that the conditions are necessary. Hence, given $\ell$, $k$, and $m$, it suffices to construct a graph with the corresponding $ppr(A)$ as follows.

%First, we note that it suffices to consider the case where $r_n = 1$. If $q$ is the largest integer such that $r_q=1$, then add $n-q$ isolated vertices, and consider the remaining $q$ vertices. For the remainder we will assume that $r_n=1$.
%
%\emph{Case 1: $n$ is odd}
%
%If $n$ is odd, then $2k+1 = 1$ 

%Let $2k+1$ be the largest odd number such that $r_{2k+1} = 1$. Let $2 \ell + 1$ be the smallest odd number such that $r_{2\ell +1}=1$. And let $2m$ be the largest even number such that $r_{2m} = 1$. Then, consider a cycle of size $2k+1$ with a single chord creating a cycle of length $2 \ell +1$. Finally, add $m-\ell$ vertices each with degree 1 whose edges are incident to a \comment{\ldots}(See Figure \ref{fig.fuzzyegg}).
%
%To show that these realize a specific permissible sequence, \comment{\ldots}
%
%By Lemma \ref{lem.oddgirth} \comment{\ldots}

\section{Skew-symmetric Matrices}\label{skew}

Previously, we only considered nonnegative matrices. This consideration benefited the analysis as every contribution to the permanent was necessarily positive. 

In this section, we consider \emph{skew-symmetric} matrices. Recall that a real matrix $A$ is \emph{skew-symmetric} if $A_{ji} = -A_{ij}$. First note that the odd positions in the \ppr-sequence of a skew-symmetric matrix have to be all zero, as shown in the following lemma.

\begin{lem}\label{lem.skewnoodd}
Let $A$ be a skew-symmetric matrix with $\ppr(A) =r_0 r_1 \cdots r_n$. Then $r_{2i+1} = 0$ for all integers $i$ with $0 \leq i \leq \lfloor \frac{n-1}{2} \rfloor$.
\end{lem}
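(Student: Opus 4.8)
The plan is to reduce the statement to a single fact about permanents: every skew-symmetric matrix of odd order has permanent zero. This reduction is immediate, because $r_{2i+1} = 1$ would, by the definition of the $\ppr$-sequence, require a principal submatrix $A[\alpha]$ of odd size $2i+1$ with $\per(A[\alpha]) \neq 0$; and any principal submatrix $A[\alpha]$ of a skew-symmetric matrix is itself skew-symmetric, since restricting to the rows and columns indexed by the same set $\alpha$ preserves the relation $A_{ji} = -A_{ij}$. Hence it suffices to prove that $\per(B) = 0$ for every $m \times m$ skew-symmetric matrix $B$ with $m$ odd.

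For the core argument I would exploit two elementary properties of the permanent. First, the permanent is invariant under transposition, $\per(B) = \per(B^{T})$, which follows by reindexing the defining sum over $S_m$ through $\sigma \mapsto \sigma^{-1}$. Second, the permanent is homogeneous of degree $m$ in the entries, so scaling the whole matrix by a constant multiplies the permanent by that constant raised to the power $m$; in particular $\per(-B) = (-1)^m \per(B)$. Now I use skew-symmetry in the form $B^{T} = -B$ and chain these facts together:
\[
\per(B) = \per(B^{T}) = \per(-B) = (-1)^m \per(B).
\]
Since $m$ is odd, $(-1)^m = -1$, giving $\per(B) = -\per(B)$, that is, $2\per(B) = 0$.

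Working over the reals, where $2 \neq 0$, this forces $\per(B) = 0$, completing the reduction and hence the lemma. I do not expect a serious obstacle here; the only point requiring care is that the final step $2\per(B) = 0 \Rightarrow \per(B) = 0$ uses characteristic $\neq 2$, which is available for real matrices. If one prefers a self-contained combinatorial route avoiding transpose invariance, one can instead pair each permutation $\sigma$ directly with $\sigma^{-1}$: by skew-symmetry the diagonal product of $\sigma^{-1}$ equals $(-1)^m$ times that of $\sigma$, so the two terms cancel whenever $\sigma \neq \sigma^{-1}$, while any involution $\sigma$ — which for odd $m$ necessarily fixes some index $j$ — carries a factor $B_{jj} = 0$ and so contributes nothing. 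Either route yields the same conclusion.
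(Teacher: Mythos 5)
Your proof is correct, and your primary route is genuinely different from (and slicker than) the paper's. The paper works term by term inside $\per(B)$: it discards permutations with fixed points (their diagonal products contain a zero diagonal entry), observes that an odd-order derangement cannot be an involution, and then pairs each derangement $\sigma$ with $\sigma^{-1}$, using skew-symmetry to show the two products differ by a factor of $(-1)^k=-1$ and hence cancel. You instead package the whole cancellation into two standard identities, $\per(B)=\per(B^{T})$ and $\per(-B)=(-1)^{m}\per(B)$, combined with $B^{T}=-B$, to get $\per(B)=-\per(B)$ and conclude over $\mathbb{R}$. Both are complete; your version is shorter and isolates exactly where the hypothesis ``odd order'' and ``characteristic $\neq 2$'' enter, while the paper's version is self-contained at the level of the defining sum and makes the combinatorial pairing explicit (which is perhaps why the authors chose it, since the same derangement/matching decomposition reappears in their Theorem~\ref{thm.skewtree}). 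Your sketched combinatorial fallback --- pair $\sigma$ with $\sigma^{-1}$, and note that an involution of odd order has a fixed point and so contributes a zero factor --- is essentially the paper's argument with the two cases merged. The only steps you should make sure to state are the ones you did state: that a principal submatrix of a skew-symmetric matrix is skew-symmetric, and that the diagonal entries vanish (needed only in the fallback).
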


\begin{proof}
%Let $B$ be a principal submatrix of $A$ of size $k$. Let $\sigma$ be any permutation in $S_k$ with inverse $sigma^{-1}$. Since, $b_{ii} = 0$ for all $i$, any $\sigma$ with a fixed-point will add 0 toward the permanent. Additionally, since $2i+1$ is odd, no $\sigma$ without a fixed point is its own inverse. Therefore, it suffices to show that:
%\[  \prod_{i=1}^{k} b_{i\,\sigma(i)} = -prod_{i=1}^{k} b_{i\,\sigma^{-1}(i)}. \]

Choose $k \le n$ odd. Let $B = (b_{ij})$ be a principal submatrix of $A$ of size $k$. We will show that $\per(B) = 0$.
\begin{eqnarray*}
\per(B) &=& \sum_{\sigma\in S_k} \left( \prod_{i=1}^{k} b_{i\,\sigma(i)}\right) \\
&=&  \sum_{\sigma\in D_k} \left( \prod_{i=1}^{k} b_{i\,\sigma(i)}\right) + \sum_{\sigma\in S_k\setminus D_k} \left( \prod_{i=1}^{k} b_{i\,\sigma(i)}\right) 
\end{eqnarray*}
where $D_k$ is the set of all derangements on $[k]$ (i.e., permutations without a fixed point). For $\sigma \in S_k \setminus D_k$, $i = \sigma_i$ for some $i$, and so $b_{i\,\sigma(i)}=0$ by skew-symmetry. Further, since $k$ is odd, no $\sigma \in D_k$ is its own inverse. Therefore, continuing from above, we have 
\begin{eqnarray*}
\per(B) &=& \sum_{\sigma\in D_k} \left( \prod_{i=1}^{k} b_{i\,\sigma(i)}\right) \\
&=& \sum_{\sigma\in D'_k} \left( \prod_{i=1}^{k} b_{i\,\sigma(i)}+  \prod_{i=1}^{k} b_{i\,\sigma^{-1}(i)}\right)
\end{eqnarray*}
where $D'_k \subset D_k$ is a maximum subset of derangements where no elements is an inverse of another. However, by skew-symmetry,
\begin{eqnarray*}
&=& \sum_{\sigma\in D'_k} \left( \prod_{i=1}^{k} b_{i\,\sigma(i)}+ \prod_{i=1}^{k} -b_{\sigma^{-1}(i)\,i}\right) \\
&=& \sum_{\sigma\in D'_k} \left( \prod_{i=1}^{k} b_{i\,\sigma(i)}+\prod_{i=1}^{k} -b_{i\, \sigma(i)}\right) \\
&=& \sum_{\sigma\in D'_k} \left( \prod_{i=1}^{k} b_{i\,\sigma(i)} +  (-1)^k \prod_{i=1}^{k} b_{i\, \sigma(i)}\right) \\
&=& \sum_{\sigma\in D'_k} \left( \prod_{i=1}^{k} b_{i\,\sigma(i)} - \prod_{i=1}^{k} b_{i\, \sigma(i)}\right) \\
&=& 0
\end{eqnarray*}

%Let $\alpha \subseteq [n]$ with $|\alpha| = 2i+1$. Let $\alphb_1, \alphb_2, \ldots, \alphb_{2i+1}$ be an ordering of alpha such that $A(\alphb_j,\alpha{j+1} is a directed cycle cover of $G[\alpha]$ then it must contain an odd cycle $C_1$. Without loss of generality suppose $\mc{C}$ contributes $1$ to $\per(A[\alpha])$. Then reversing the edges in $C_1$, we obtain another cycle cover which contributes $-1$ to $\per(A[\alpha])$.
\end{proof}
  
%\begin{lem}
%Let $A$ be a skew-symmetric matrix with a nonzero entry and  perrank sequence $\per(A) =q_0, q_1, \cdots, q_n$. Then , $q_2 = 1$. \end{lem}  
Now, the question is to characterize the patterns of zeros and ones in the even positions of this sequences. Concentrating on the even positions, several examples of small size are checked and it is observed that there are no gaps between the ones in the even positions. It is easy to see that this property holds for trees. In the following theorem we will characterize $\ppr(A)$ for all skew-symmetric matrices whose underlying graph is a tree.

\begin{thm}\label{thm.skewtree}
Let $A$ be a skew-symmetric matrix whose underlying graph is a tree with a maximum matching of size $\mu(G)$. Then, the principal permanent rank sequence $\ppr(A) = r_0 r_1 \cdots r_n$ has $r_{k} = 1$ if and only if $k$ is even and $k \le 2\mu(G)$. Furthermore, any such sequence is realizable by a skew-symmetric matrix whose underlying graph is a tree.\end{thm}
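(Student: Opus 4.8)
The plan is to separate the argument into two parts: the characterization of $\ppr(A)$ for a fixed skew-symmetric tree matrix, and the realizability of every such sequence. For the characterization I would first clear away the trivial positions. Lemma~\ref{lem.skewnoodd} already gives $r_{2i+1}=0$, so no odd index is a $1$; and since skew-symmetry forces $a_{ii}=-a_{ii}=0$, every diagonal entry vanishes, so $r_0=1$, matching the claim at $k=0$. The real content is therefore the even indices $k=2s\ge 2$.

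The heart of the proof is a no-cancellation computation that exploits acyclicity. Fix $\alpha\subseteq[n]$ with $|\alpha|=k$. Every nonzero term $\prod_{i\in\alpha}a_{i\,\sigma(i)}$ of $\per(A[\alpha])$ forces $\sigma$ to be a generalized cycle of $G[\alpha]$; but $G$ is a tree, so $G[\alpha]$ is a forest, and since the diagonal of $A$ is zero the only $\sigma$ with a nonzero contribution are products of disjoint transpositions, i.e.\ perfect matchings of $G[\alpha]$. For such a $\sigma$, each transposition on an edge $ij$ contributes $a_{ij}a_{ji}=-a_{ij}^2$, so a perfect matching $M$ contributes $(-1)^{|M|}\prod_{ij\in M}a_{ij}^2$. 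As every perfect matching of $G[\alpha]$ has the same size $s=k/2$, this yields
\[
\per(A[\alpha]) = (-1)^{s}\sum_{M}\prod_{ij\in M}a_{ij}^2,
\]
a sum of terms all of one sign, each of strictly positive magnitude since the $a_{ij}$ are nonzero on edges. Hence $\per(A[\alpha])\ne 0$ exactly when $G[\alpha]$ has a perfect matching. This is precisely where the tree hypothesis is essential, and I expect it to be the main obstacle: once longer cycles are present the signed contributions need not agree, so cancellation can destroy the permanent even when a generalized cycle exists.

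With this equivalence in hand the characterization follows from elementary matching facts. If $r_k=1$ for some $k\ge 1$, then some $k$-subset induces a subgraph with a perfect matching, which forces $k$ even and produces a matching of size $k/2$ in $G$, whence $k\le 2\mu(G)$. Conversely, for even $k=2s$ with $s\le\mu(G)$, take $s$ edges of a maximum matching and let $\alpha$ be their $2s$ endpoints; then $G[\alpha]$ has a perfect matching, so $r_k=1$.

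Finally, for realizability I would construct, for each target $\mu$ with $1\le\mu\le\lfloor n/2\rfloor$ (the only values attainable by a tree on $n\ge 2$ vertices; the case $\mu=0$, $n=1$ is the $1\times 1$ zero matrix), a tree on $n$ vertices with maximum matching exactly $\mu$, and assign $a_{ij}=1$, $a_{ji}=-1$ on each edge. A convenient family is the path $v_1v_2\cdots v_{2\mu}$ together with the remaining $n-2\mu$ vertices attached as leaves to $v_1$; a short case analysis on which edge incident to $v_1$ is used shows the maximum matching stays exactly $\mu$. The resulting weighted adjacency matrix is skew-symmetric with this tree as underlying graph, so by the characterization its $\ppr$-sequence is exactly the prescribed one, completing the proof.
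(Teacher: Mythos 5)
Your proposal is correct and follows essentially the same route as the paper: both reduce $\per(A[\alpha])$ to a single-signed sum $(-1)^{k/2}\sum_M\prod_{ij\in M}a_{ij}^2$ over perfect matchings of $G[\alpha]$ by using acyclicity to kill permutations with cycles of length at least $3$ and skew-symmetry to kill those with fixed points, then realize the sequences with a path on $2\mu$ vertices plus pendant leaves (you attach them to $v_1$, the paper to $v_{2\mu-1}$; both work). Your explicit handling of $r_0$ and of the degenerate case $\mu=0$ is a small but welcome addition.
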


\begin{proof}
If $k\le n$ is odd, then $q_k = 0$ by Lemma \ref{lem.skewnoodd}. Choose $k$ even and $\alpha \subset [n]$ with $|\alpha|=k$ Let $B = A [\alpha]= (b_{i,j})$. We will show the permanent of $B$ is nonzero if $k \le 2 \mu(G)$ and $0$ otherwise.

\begin{eqnarray*}
\per(B) &=& \sum_{\sigma\in S_k} \left( \prod_{i=1}^{k} b_{i\,\sigma(i)}\right)\\
&=& \sum_{\sigma\in M_{k/2}} \left( \prod_{i=1}^{k} b_{i\,\sigma(i)}\right)+ \sum_{\sigma\in D_k \setminus M_{k/2}} \left( \prod_{i=1}^{k} b_{i\,\sigma(i)}\right) + \sum_{\sigma\in S_k\setminus (D_k  \cup  M_{k/2})} \left( \prod_{i=1}^{k} b_{i\,\sigma(i)}\right) 
\end{eqnarray*}
where  $M_{k/2}$ is the {set of permutations corresponding to the maximum matchings of $G[\alpha]$} (i.e., a disjoint product of transpositions) and $D_k$ is the set of all derangements on $\alpha$.
Observe that for $\sigma \in  D_k \setminus M_{k/2}$, $\sigma$ must have a cycle of size 3 or more; however, $G$ is a tree, so no such $\sigma$ contributes to its sum.
Similarly, as in the proof of Lemma \ref{lem.skewnoodd}, any permutation $\sigma \not \in D_k$ also contributes 0. Therefore, we have
%Let $\mathcal{M}_{k/2}$ denote all matchings of size $k/2$. We have
\begin{eqnarray*}
\per(B) &=& \sum_{\sigma\in M_{k/2}} \left( \prod_{i=1}^{k} b_{i\,\sigma(i)}\right)\\ 
&=&(-1)^{k/2} \sum_{m \in M_{k/2}} \left( \prod_{\{i,j\} \in m} b_{ij}^2 \right).
\end{eqnarray*}
where the final line considers the matchings as a collection of edges.
Since for the last term, $b_{i,j}^2 > 0$, the final sum is nonzero so long as the sum is not empty. The sum is empty only when $k > 2\mu(G)$.

Now, we construct a skew-symmetric matrix $A$ whose underlying graph is a tree $T$ and $\ppr(A) = r_0 r_1 \ldots r_n$, where $r_j = 1$ if and only if $j$ is even and $1 \leq j \leq 2m$, for some $m\leq n/2$. Consider a path of length $2m$ on vertices $1,2,\ldots, 2m$. Add $n-2m$ vertices and connect all of them to vertex $2m-1$. Let $B$ be the adjacency matrix of this graph, and $A$ be the matrix obtained from $B$ by negating all the lower-diagonal entries. Since $T$ does not have any cycles, all the nonzero terms in the permanent of a principal submatrix of $A$ come from a matching of $T$. Hence $\ppr(A) = r_0 r_1 \ldots r_n$, with $r_j = 1$ if and only if $j$ is even and $1 \leq j \leq 2m$.
\end{proof}

%\section{Graph Products}\label{graphproducts}
%
%\begin{prop} \label{prop.graphsquare}
%
%Let $A,B \in M_n$ be nonnegative with $\ppr(A)=r_0,r_1,\dots,r_n$, $\ppr(B)=s_0,s_1,\dots,s_n$, $\ppr(A \otimes B)=q_0)q_1,\dots,q_{n^2}$.
%%, and $\ppr(A \otimes A)=t_0)t_1,\dots,t_{n^2}$. Then:
%
%\begin{enumerate}
%\item $q_0=1 \iff r_0s_0 = 1$.
%\item $q_1=1 \iff r_1s_1=1$.
%\item $r_k=s_l=1 \implies q_{k+l}=1$ for $k,l\geq 1$.
%%\item $r_{k}=1 \implies t_{k^2}=1$ for $k\geq 1$.
%%\item $t_{n^2}=1 \iff t_n=1$.
%\end{enumerate}
%\end{prop}
%
%\begin{proof}
%Another proof from the book goes here.
%\end{proof}

%The previous results focused on the permanent rank of nonnegative matrices. The non negativity a however skew-symmetric

%\section{Other Results}\label{sectionotherresults}
%\comment{- We have some partial results on Kronecker product of graphs.\\
%- ppr of a cone on G when ppr of G is known, is done.\\
%- ppr of skew symmetric matrices, what do we know??}
%\section{Future Work}\label{sectionfuture}
%\comment{What shall be done next?}

\section*{Acknowledgement}
An acknowledgment will be added after the work is accepted.

%The idea of the ppr-sequences was developed during a graduate research workshop hosted by the
%Denver University and the University of Colorado - Denver. We thank DU and UCD for their support and amazing hospitality. \comment{Can someone write a better thing here, please? I'm horrible at this things! like many other things.}

\end{document}